\documentclass[10pt]{amsart}
\usepackage{amsfonts,amssymb,amscd,amsmath,enumerate,verbatim,calc}

\textwidth=11.5cm
\textheight=19.5cm
\topmargin=0.5cm
\oddsidemargin=0.5cm
\evensidemargin=0.5cm
\pagestyle{plain}


\setlength{\textheight}{220mm} \setlength{\textwidth}{155mm}
\setlength{\oddsidemargin}{1.25mm}
\setlength{\evensidemargin}{1.25mm} \setlength{\topmargin}{0mm}

\pagestyle{myheadings} \markboth{Alexandre Laugier and Manjil P.~Saikia} {Lucas' Theorem}

\setbox0=\hbox{$+$}
\newdimen\plusheight
\plusheight=\ht0
\def\+{\;\lower\plusheight\hbox{$+$}\;}

\setbox0=\hbox{$-$}
\newdimen\minusheight
\minusheight=\ht0
\def\-{\;\lower\minusheight\hbox{$-$}\;}

\setbox0=\hbox{$\cdots$}
\newdimen\cdotsheight
\cdotsheight=\plusheight
\def\cds{\lower\cdotsheight\hbox{$\cdots$}}

\setbox0=\hbox{$+$}
\numberwithin{equation}{section}
 \theoremstyle{plain}
\newtheorem{thm}{Theorem}[section]

\newtheorem{lem}[thm]{Lemma}

\begin{document}
\title[Lucas' Theorem] {A new proof of Lucas' Theorem}
\author{Alexandre Laugier}
\address{Lyc{\'e}e professionnel hotelier La Closerie, 10 rue Pierre Loti - BP 4, 22410 Saint-Quay-Portrieux, France}
\email{laugier.alexandre@orange.fr}
\author{Manjil P.~Saikia}
\thanks{The second author is supported by DST INSPIRE Scholarship 422/2009 from Department of Science and Technology, Government of India.}
\address{Department of Mathematical Sciences, Tezpur University, Napaam, Sonitpur, Assam, Pin-784028, India}
\email{manjil@gonitsora.com}

\maketitle

\begin{abstract}
 We give a new proof of Lucas' Theorem in elementary number theory.
\end{abstract}



\vskip 3mm

\noindent{\footnotesize Key Words:Lucas' theorem.}

\vskip 3mm

\noindent{\footnotesize 2010 Mathematical Reviews Classification
Numbers: 11A07, 11A41, 11A51, 11B50, 11B65, 11B75.}

\section{{Introduction}}

One of the most useful results in elementary number theory is the following result of E.~Lucas

\begin{thm}[\cite{el},E.~Lucas (1878)]\label{lucas}
Let $p$ be a prime and $m$ and $n$ be two integers considered in the following way, $$m=a_kp^k+a_{k-1}p^{k-1}+\ldots+a_1p+a_0,$$ $$n=b_lp^l+b_{l-1}p^{l-1}+\ldots+b_1p+b_0,$$ where all $a_i$ and $b_j$ are non-negative integers less than $p$. Then, $$\binom{m}{n}=\binom{a_kp^k+a_{k-1}p^{k-1}+\ldots+a_1p+a_0}{b_lp^l+b_{l-1}p^{l-1}+\ldots+b_1p+b_0} \equiv \prod_{i=0}^{\max(k,l)}\binom{a_i}{b_i}~(\textup{\textup{mod}}~p).$$
Notice that the theorem is true if $a_i\geq b_i$ for
$i=0,1,2,\ldots,\max(k,l)$
\end{thm}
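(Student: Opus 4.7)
The plan is to prove the theorem using a generating-function argument in $(\mathbb{Z}/p\mathbb{Z})[x]$, based on the ``Freshman's Dream'' identity. First I would establish the key lemma that $(1+x)^p \equiv 1+x^p \pmod{p}$, which follows from the ordinary binomial theorem together with the observation that $\binom{p}{j} = \frac{p!}{j!(p-j)!}$ is divisible by $p$ whenever $0 < j < p$, since the factor $p$ in the numerator is not cancelled in the denominator. A short induction on $i$ then upgrades this to $(1+x)^{p^i} \equiv 1 + x^{p^i} \pmod{p}$ for every $i \ge 0$.

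Next I would combine this with the base-$p$ expansion of $m$ to write
$$(1+x)^m = \prod_{i=0}^{k} \bigl((1+x)^{p^i}\bigr)^{a_i} \equiv \prod_{i=0}^{k} (1+x^{p^i})^{a_i} \pmod{p},$$
and expand each factor on the right by the ordinary binomial theorem:
$$(1+x^{p^i})^{a_i} = \sum_{c_i=0}^{a_i} \binom{a_i}{c_i} x^{c_i p^i}.$$
Multiplying these out and collecting terms yields
$$(1+x)^m \equiv \sum_{(c_0,\dots,c_k)} \left(\prod_{i=0}^{k} \binom{a_i}{c_i}\right) x^{c_0 + c_1 p + \cdots + c_k p^k} \pmod{p},$$
with the tuples constrained by $0 \le c_i \le a_i < p$.

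Finally I would extract the coefficient of $x^n$ on both sides. On the left it is $\binom{m}{n}$; on the right, the uniqueness of the base-$p$ representation of $n = \sum b_i p^i$ (valid because each $b_i < p$) forces $c_i = b_i$ for every $i$, contributing precisely $\prod_i \binom{a_i}{b_i}$. The main obstacle will be the bookkeeping in the degenerate cases, which must be handled to match the statement's range $0 \le i \le \max(k,l)$. If $l > k$ we extend $m$'s expansion by setting $a_i = 0$ for $k < i \le l$; then $\binom{a_i}{b_i} = 0$ whenever $b_i > 0$, and one checks $m < p^{k+1} \le p^l \le n$, so $\binom{m}{n} = 0$ and both sides agree. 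If $k > l$ we pad $n$'s expansion with $b_i = 0$, producing harmless factors $\binom{a_i}{0} = 1$. This verifies the congruence in every case, with both sides simultaneously vanishing modulo $p$ precisely when some $b_i > a_i$.
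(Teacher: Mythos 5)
Your proof is correct, but it follows a genuinely different route from the paper. You apply the Freshman's Dream globally: expand $(1+x)^m\equiv\prod_i(1+x^{p^i})^{a_i}\pmod p$ and extract the coefficient of $x^n$, where the constraint $0\le c_i\le a_i\le p-1$ together with uniqueness of base-$p$ representation forces $c_i=b_i$ (or forces the coefficient to vanish when some $b_i>a_i$, in which case $\binom{a_i}{b_i}=0$ as well). This is the classical generating-function proof, and it is self-contained once you know $p\mid\binom{p}{j}$ for $0<j<p$. The paper instead takes a factorial-decomposition route: it writes $n!=q\,a_0!\,(a_1p)!\cdots(a_lp^l)!$ with $q$ a multinomial-type integer, spends most of its effort proving $q\equiv 1\pmod p$ via a chain of recursive congruences among auxiliary quantities $q_{a,l,k,i}$, and only then invokes the same polynomial identity you use --- but locally, just to reduce $\binom{a_ip^i}{b_ip^i}$ to $\binom{a_i}{b_i}$. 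Your argument is shorter, needs no assumption $a_i\ge b_i$ (both sides vanish simultaneously when it fails, which you correctly note), and handles the padding of digits cleanly; the paper's approach, while longer, yields as a byproduct the separate fact that the multinomial coefficient $n!/\bigl(a_0!\,(a_1p)!\cdots(a_lp^l)!\bigr)$ is $\equiv 1\pmod p$, which is of some independent interest.
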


There has been many different proofs of this result in the years that followed its first publication. We present here an alternate approach using elementary 
number theoretic techniques.

\section{{Proof of Theorem \ref{lucas}}}

First of all, we state and prove a few lemmas

\begin{lem}
If $$
a_0+a_1X+a_2X^2+\ldots+a_nX^n\equiv b_0+b_1X+b_2X^2+\ldots+b_nX^n~(\textup{mod}~p)
$$
then
$$
a_i\equiv b_i~(\textup{mod}~p)\,\,\forall\,i\in [[ 0,n]].
$$
\end{lem}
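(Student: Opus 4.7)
The plan is to read the stated congruence as an equality of polynomials in $(\mathbb{Z}/p\mathbb{Z})[X]$, since this is exactly how it will be applied when proving Lucas' theorem (typically by comparing coefficients of $(1+X)^m$ expanded two different ways modulo $p$). Under that reading, the conclusion is essentially a restatement of the definition of polynomial equality; however, I would still spell out a short induction to make the argument fully transparent.

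First I would introduce the difference polynomial
\[
P(X)=\sum_{i=0}^{n}(a_i-b_i)X^i,
\]
so that the hypothesis is simply $P(X)\equiv 0\pmod{p}$ in the polynomial sense. The goal then becomes: every coefficient of $P$ is divisible by $p$.

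I would proceed by induction on $n$. The base case $n=0$ is immediate since $P(X)=a_0-b_0$ and the hypothesis reads $a_0-b_0\equiv 0\pmod{p}$. For the inductive step, I would substitute $X=0$ in the congruence (which is legitimate because the coefficients on both sides are integers) to obtain $a_0\equiv b_0\pmod{p}$. Subtracting this from $P(X)$ and factoring out $X$ produces
\[
X\cdot\bigl((a_1-b_1)+(a_2-b_2)X+\cdots+(a_n-b_n)X^{n-1}\bigr)\equiv 0\pmod{p}.
\]
Since $X$ is not a zero divisor in $\mathbb{Z}[X]$ modulo $p$ (equivalently, in $(\mathbb{Z}/p\mathbb{Z})[X]$, which is an integral domain because $p$ is prime), the second factor must itself be $\equiv 0\pmod{p}$. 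The inductive hypothesis applied to this polynomial of degree at most $n-1$ then yields $a_i\equiv b_i\pmod{p}$ for all $1\le i\le n$, completing the induction.

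The only real subtlety to flag is the interpretation of the congruence: if one instead read the statement as a numerical congruence holding for every integer value of $X$, the conclusion would require a hypothesis such as $n<p$ to avoid the vanishing of $X^p-X$ modulo $p$. Since the authors clearly intend a formal polynomial congruence, the argument above, which relies only on evaluation at $0$ and on $\mathbb{Z}/p\mathbb{Z}$ being a field (so that $(\mathbb{Z}/p\mathbb{Z})[X]$ has no zero divisors), is the cleanest route and avoids any such issue.
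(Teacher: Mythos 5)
Your proof is correct, and you have read the congruence the same way the paper does: as a formal identity in $(\mathbb{Z}/p\mathbb{Z})[X]$, not as a numerical congruence for every integer value of $X$ (your closing remark about $X^p-X$ is exactly the right subtlety to flag). The mechanism differs, though. The paper simply unwinds the definition: the hypothesis means the difference of the two polynomials equals $p\,k(X)$ for some integer polynomial $k(X)=k_0+k_1X+\cdots+k_nX^n$, and equating coefficients immediately gives $a_i=b_i+pk_i$ for every $i$ --- one step, no induction, no appeal to primality. Your route (evaluate at $0$, peel off the constant term, cancel a factor of $X$ using that $(\mathbb{Z}/p\mathbb{Z})[X]$ is an integral domain, and induct) reaches the same conclusion but invokes machinery the statement does not need: the lemma holds verbatim with $p$ replaced by any modulus $m$, whereas your cancellation step leans on $p$ being prime. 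That said, your cancellation could equally be justified by direct coefficient comparison of $XQ(X)=p\,h(X)$, which would restore full generality; as written, the argument is sound but slightly less economical and slightly less general than the paper's.
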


\begin{proof}
Indeed, if
$a_0+a_1X+a_2X^2+\cdots+a_nX^n\equiv
b_0+b_1X+b_2X^2+\cdots+b_nX^n~(\textup{mod}~p)$, then there exits a polynomial
$k(X)=k_0+k_1X+k_2X^2+\cdots+k_nX^n$ at most of degree $n$ such that
$a_0+a_1X+a_2X^2+\cdots+a_nX^n=
b_0+b_1X+b_2X^2+\cdots+b_nX^n+p(k_0+k_1X+k_2X^2+\cdots+k_nX^n)$. This gives $a_0+a_1X+a_2X^2+\cdots+a_nX^n=
b_0+pk_0+(b_1+pk_1)X+(b_2+pk_2)X^2+\cdots+(b_n+pk_n)X^n$. Hence we get $a_0=b_0+pk_0$, $a_1=b_1+pk_1$,
$a_2=b_2+pk_2$ ,..., $a_n=b_n+pk_n$. Or equivalently
$a_0\equiv b_0~(\textup{mod}~p)$, $a_1=b_1~(\textup{mod}~p)$, $a_2=b_2~(\textup{mod}~p)$ ,$\cdots$,
$a_n=b_n~(\textup{mod}~p)$.

The reciprocal implication is trivial.
\end{proof}

\begin{lem}
If the base $p$ expansion of a positive
integer $n$ is,
$$
n=a_0+a_1p+a_2p^2+\cdots+a_lp^l
$$
then we have
$$
n!=qa_0!(a_1p)!(a_2p^2)!\cdots(a_lp^l)!
$$
with $q$ a natural number.
\end{lem}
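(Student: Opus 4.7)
The plan is to recognize the quotient
\[
\frac{n!}{a_0!\,(a_1 p)!\,(a_2 p^2)! \cdots (a_l p^l)!}
\]
as a multinomial coefficient, and then invoke its integrality. Since the base-$p$ digits satisfy $a_0 + a_1 p + a_2 p^2 + \cdots + a_l p^l = n$ by hypothesis, the above quotient is exactly
\[
\binom{n}{a_0,\; a_1 p,\; a_2 p^2,\; \ldots,\; a_l p^l},
\]
which counts the number of ways to distribute $n$ distinguishable objects into ordered blocks of sizes $a_0, a_1 p, \ldots, a_l p^l$. This number is a non-negative integer $q$, and rearranging gives the stated identity $n! = q\, a_0!\,(a_1 p)!\,(a_2 p^2)! \cdots (a_l p^l)!$.

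Should the authors prefer to avoid a direct appeal to multinomial coefficients, I would instead argue inductively with ordinary binomial coefficients. Set $n_0 = n$ and $n_{i+1} = n_i - a_i p^i$, so that $n_i = a_i p^i + a_{i+1} p^{i+1} + \cdots + a_l p^l$ at each stage. Because $\binom{n_i}{a_i p^i}$ is an integer, we get $n_i! = \binom{n_i}{a_i p^i}\,(a_i p^i)!\,n_{i+1}!$. Substituting these identities successively for $i=0,1,\ldots,l$ (with $n_{l+1}=0$ so that $n_{l+1}!=1$) telescopes to
\[
n! = \left(\prod_{i=0}^{l}\binom{n_i}{a_i p^i}\right) a_0!\,(a_1 p)!\,(a_2 p^2)! \cdots (a_l p^l)!,
\]
exhibiting $q = \prod_{i=0}^{l}\binom{n_i}{a_i p^i}$ explicitly as a natural number.

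There is no serious obstacle in either route; the only thing to invoke is the standard combinatorial fact that multinomial coefficients (or, equivalently, iterated binomial coefficients) are integers. Note also that when some digit $a_i$ is zero the corresponding factor $(a_i p^i)! = 0! = 1$ is harmless, so the statement remains valid without any case analysis. The proof deliberately does not try to compute $q$ explicitly, since only the fact $q \in \mathbb{N}$ will be needed for the subsequent deduction of Lucas' theorem.
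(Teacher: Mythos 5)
Your first route is exactly the paper's own argument: the authors likewise identify $q$ with the multinomial coefficient $\frac{n!}{a_0!\,(a_1p)!\cdots(a_lp^l)!}$, interpreted as the number of arrangements of a multiset with blocks of sizes $a_0, a_1p, \ldots, a_lp^l$, and conclude $q\in\mathbb{N}$ from that count. The proposal is correct and takes essentially the same approach.
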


\begin{proof}
Since the factorial of a natural number is a natural number,
there exists a rational number q such that
$$
q=\frac{n!}{a_0!(a_1p)!(a_2p^2)!\ldots(a_lp^l)!}.
$$ Let $S$ be the set, $$S=\left\{x_1,x_2,\ldots,x_l\right\}.$$
We consider lists of elements of $S$ where $x_0$ is repeated $a_0$ times, $x_1$
is repeated $a_1p$ times,$\ldots$, $x_l$ is repeated $a_lp^l$ times such that, $0\leq
a_i\leq p-1$ with $i\in [[ 0,l ]]$. In such a list,
there are $l+1$ unlike groups of identical elements. For instance the
selection
$\left(\underbrace{x_0,x_0,\ldots,x_0}_{a_0},\underbrace{x_1,x_1,\ldots,x_1}_{a_1p},
\ldots\ldots,\underbrace{x_l,x_l,\ldots,x_l}_{a_lp^l}\right)$ is such a
list of $n$ elements which contains $l+1$ unlike groups of identical
elements.

The number of these lists is given by $\frac{n!}{a_0!(a_1p)!(a_2p^2)!\ldots(a_lp^l)!}$. It proves that the
rational number $q$ is a natural number. And, since the factorial of a
natural number is non-zero (even if this number is $0$ because
$0!=1$), we deduce that $q$ is a non-zero natural number.
\end{proof}

\begin{thm}\label{qmodp}
Let $n=ap+b=a_0+a_1p+a_2p^2+\ldots+a_lp^l$ such that $0\leq b\leq p-1$ and $0\leq a_i\leq p-1$ with $i\in[[ 0,l]]$.\
Then $q\equiv 1~(\textup{mod}~p)$.
\end{thm}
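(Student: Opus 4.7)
The plan is to prove $q\equiv 1\pmod{p}$ by induction on the length $l+1$ of the base-$p$ expansion of $n$, taking Lemma 2.2 as the structural input. The base case $l=0$ is immediate: $n=a_0<p$ forces $q=1$. For the inductive step I would peel off the \emph{top} digit rather than the bottom one, setting $n'=a_0+a_1p+\cdots+a_{l-1}p^{l-1}$, so that $n=n'+a_lp^l$ with $n'<p^l$. Since the base-$p$ expansion of $n'$ has at most $l$ digits, the inductive hypothesis applied to $n'$ yields $q_{n'}\equiv 1\pmod{p}$, where $q_{n'}$ denotes the constant associated to $n'$ by Lemma 2.2.

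Next I would apply Lemma 2.2 to both $n$ and $n'$ and form the ratio of their factorials. The shared factors $a_0!\,(a_1p)!\cdots(a_{l-1}p^{l-1})!$ cancel, leaving
\[
\binom{n}{a_lp^l}\;=\;\frac{n!}{n'!\,(a_lp^l)!}\;=\;\frac{q_n}{q_{n'}},
\]
so that $q_n=q_{n'}\cdot\binom{n}{a_lp^l}$. The whole problem therefore reduces to proving the congruence $\binom{a_lp^l+n'}{n'}\equiv 1\pmod{p}$ whenever $0\le n'<p^l$ and $0\le a_l\le p-1$.

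For this final congruence I would expand
\[
\binom{a_lp^l+n'}{n'}\;=\;\prod_{k=1}^{n'}\frac{a_lp^l+k}{k}
\]
and compare factors with care. The key observation is that the $p$-adic valuation satisfies $v_p(a_lp^l+k)=v_p(k)$ for every $k$ in the range $1\le k\le n'<p^l$, because $v_p(k)<l$ while $v_p(a_lp^l)\ge l$. Writing $k=p^{v_p(k)}k'$ with $\gcd(k',p)=1$, one obtains $a_lp^l+k=p^{v_p(k)}\bigl(a_lp^{\,l-v_p(k)}+k'\bigr)$ with $a_lp^{\,l-v_p(k)}+k'\equiv k'\pmod{p}$, since $l-v_p(k)\ge 1$. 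After extracting the common power $p^{v_p(n'!)}$ from numerator and denominator, what remains are two products of integers coprime to $p$ whose matched factors agree modulo $p$, so their ratio is $\equiv 1\pmod{p}$.

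The main obstacle is precisely this last congruence. A naive ``reduce numerator and denominator mod $p$'' argument collapses the moment $n'\ge p$, because then $n'!$ itself is divisible by $p$ and ordinary division modulo $p$ is no longer legal. The substantive work is the factor-by-factor matching of $p$-adic valuations; once that bookkeeping is in place the residue calculation is essentially a single line, and chaining it back through $q_n=q_{n'}\binom{n}{a_lp^l}$ closes the induction.
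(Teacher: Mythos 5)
Your proof is correct, but it takes a genuinely different route from the paper's. The paper never peels off the top digit; instead it introduces the family of constants $q_{a,l,k,i}$ obtained by decrementing the digit $a_k$ one unit at a time, proves $q_{a,l,k,i+1}\equiv q_{a,l,k,i}\pmod p$ via the identity $q_{a,l,1,i+1}\binom{ap+b-ip}{p}=q_{a,l,1,i}\binom{(a_1-i)p}{p}$ together with $\binom{(a-i)p+b}{p}\equiv a-i\pmod p$, and then chains these congruences to reduce the general $q=q_{a,l,k,0}$ to the two-digit constant $q_{a,1,1,0}$, which it evaluates directly by expanding $qa_0!=\prod_{r=0}^{a_0-1}(a_1p+a_0-r)\equiv a_0!\pmod p$. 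Your induction on the number of digits replaces that entire chain by the single congruence $\binom{a_lp^l+n'}{n'}\equiv 1\pmod p$ for $0\le n'<p^l$, proved by matching $p$-adic valuations factor by factor in $\prod_{k=1}^{n'}\frac{a_lp^l+k}{k}$, using $v_p(a_lp^l+k)=v_p(k)<l$. This buys a substantially shorter and more transparent argument: you avoid the four-index bookkeeping, you do not need the paper's separate lemma that $\gcd(q,p)=1$ (invertibility modulo $p$ of the reduced denominator product is all that is required), and the one point where genuine care is needed --- that $n'!$ is divisible by $p$ once $n'\ge p$, so naive reduction of numerator and denominator fails --- is exactly the point you isolate and resolve. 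The paper's approach, for its part, stays entirely at the level of the elementary congruence $\binom{cp+b}{p}\equiv c\pmod p$ and never invokes valuations, which is closer in spirit to its stated aim, but its reduction chain is considerably harder to follow than your induction.
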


Before we prove Theorem \ref{qmodp} we shall state and prove the following non-trivial lemmas.

\begin{lem}
The integers $q$ and $p$ are relatively prime.
\end{lem}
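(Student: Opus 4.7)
The plan is to prove that the $p$-adic valuation of $q$ is zero. Since the preceding lemma already shows $q$ is a positive integer, the statement $\gcd(q,p)=1$ is equivalent to $v_p(q)=0$, where $v_p$ denotes the exponent of $p$ in the prime factorisation. Using the defining identity $n!=q\,a_0!(a_1p)!\cdots(a_lp^l)!$, this reduces to verifying
\[
v_p(n!) \;=\; \sum_{i=0}^{l} v_p\bigl((a_i p^i)!\bigr).
\]

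The natural tool is Legendre's (de Polignac's) formula
\[
v_p(m!) \;=\; \frac{m - s_p(m)}{p-1},
\]
where $s_p(m)$ is the sum of the base-$p$ digits of $m$. Applied to $n$ this gives $v_p(n!)=(n-(a_0+a_1+\cdots+a_l))/(p-1)$. For each factor in the denominator, since $0\le a_i\le p-1$, the integer $a_ip^i$ has the single nonzero base-$p$ digit $a_i$, so $s_p(a_ip^i)=a_i$ and hence $v_p((a_ip^i)!)=(a_ip^i-a_i)/(p-1)$. Summing from $i=0$ to $l$ and using $n=\sum_i a_i p^i$, the right-hand side collapses to $(n-\sum_i a_i)/(p-1)$, which matches $v_p(n!)$ exactly. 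Hence $v_p(q)=0$ and the claim follows.

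I do not expect a genuine obstacle, but one presentational issue is that Legendre's formula is not established earlier in the excerpt. If the authors prefer a self-contained argument, I would replace the invocation by the elementary count $v_p(m!)=\sum_{j\ge 1}\lfloor m/p^j\rfloor$, proved by counting how many of $1,2,\dots,m$ are divisible by each power $p^j$. Applying this separately to $m=n$ and to each $m=a_ip^i$, then interchanging the finite order of summation, yields the same cancellation without any appeal to digit sums; the bookkeeping is a little heavier but entirely routine.
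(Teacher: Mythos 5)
Your proof is correct, and it takes a genuinely different route from the paper. You compute the $p$-adic valuation of both sides of $n!=q\,a_0!(a_1p)!\cdots(a_lp^l)!$ via Legendre's formula and observe that, because each $a_ip^i$ has the single nonzero base-$p$ digit $a_i$, the valuations of the denominator factors sum to exactly $v_p(n!)=(n-\sum_i a_i)/(p-1)$, forcing $v_p(q)=0$. The paper instead argues by contradiction: it assumes $q=q'p^x$ with $x\geq 1$, and claims that $n!$ would then have to ``contain a factor'' $a_{l+x}p^{l+x}$ exceeding $n$ (using $p^{l+1}>n$), which is impossible. That argument is considerably more informal --- the step asserting that a power of $p$ dividing $q$ must manifest as a literal factor $a_{l+x}p^{l+x}$ of $n!$ is not really justified --- so your valuation computation is both cleaner and more rigorous. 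The only cost is the appeal to Legendre's formula, which is not established in the paper; your suggested fallback to the elementary identity $v_p(m!)=\sum_{j\geq 1}\lfloor m/p^j\rfloor$ (with the same telescoping over digits) removes even that dependency and keeps the argument self-contained.
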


\begin{proof}
If $0<q<p$, since $p$ is prime, $q$ and $p$ are relatively
prime.

If $q\geq p$, let us assume that $p$ and $q$ are not relatively
prime. It would imply that there exist an integer $x>0$ and a non-zero
natural number $q'$ such that $q=q'p^x$ with $\textup{gcd}(q',p)=1$. Since
$n!=qa_0!(a_1p)!\ldots(a_{l-1}p^{l-1})!(a_lp^l)!$ we get $n!=q'p^xa_0!(a_1p)!\ldots(a_lp^l)!$. It follows that $n!$ would contain a factor $a_{l+x}p^{l+x}$ such
that $q'=a_{l+x}q''$ with $a_{l+x}\in[[
1,p-1]]$. But, $a_{l+x}p^{l+x}>n$.

Indeed we know that $1+p+\ldots+p^l=\frac{p^{l+1}-1}{p-1}$. So
$p^{l+1}=1+(p-1)(1+p+\ldots+p^{l})$. Then
$p^{l+1}>(p-1)+(p-1)p+\ldots+(p-1)p^l$. Since $0\leq a_i\leq p-1$
with $i\in[[ 0,l]]$, we have $0\leq a_ip^i\leq
(p-1)p^i$ with $i\in[[ 0,l]]$. Therefore,
$p^{l+1}>a_0+a_1p+\ldots+a_lp^l$ so $p^{l+1}>n\,\Rightarrow\,a_{l+x}p^{l+x}>n$.

Since $n!$ doesn't include terms like $a_{l+x}p^{l+x}>n$ with
$a_{l+x}\in[[ 1,p-1]]$, we obtain a contradiction. It
means that the assumption $q=q'p^x$ with $x\in\mathbb{N}^{\star}$ and
$gcd(q',p)=1$ is not correct. So, $q$ is not divisible by a power of
$p$. It results that $q$ and $p$ are relatively prime.
\end{proof}

We know that $n!=(ap+b)!=qa_0!(a_1p)!\ldots(a_lp^l)!$ with
$a=\lfloor\frac{n}{p}\rfloor$, $0\leq a_i\leq p-1$ with
$i=0,1,2,\ldots,p-1$ and $b=a_0$. Let $q_{a,l,1,i}$ with $0\leq i\leq a_1\leq a$ be the natural number
$$
q_{a,l,1,i}=\frac{(ap+b-ip)!}{a_0!((a_1-i)p)!(a_2p^2)!\ldots(a_lp^l)!}.
$$ In particular, we have $q=q_{a,l,1,0}$.

\begin{lem}
$q_{a,l,1,i+1}\equiv q_{a,l,1,i}~(\textup{mod}~p).$
\end{lem}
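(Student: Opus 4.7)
The plan is to form the ratio $q_{a,l,1,i+1}/q_{a,l,1,i}$ directly from the definition, recognise numerator and denominator as products of $p$ consecutive integers, and then reduce the resulting identity modulo $p$ using Wilson's theorem together with the congruence $a\equiv a_1\pmod p$.

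Concretely, from the explicit formula,
$$\frac{q_{a,l,1,i+1}}{q_{a,l,1,i}}=\frac{(ap+b-(i+1)p)!}{(ap+b-ip)!}\cdot\frac{((a_1-i)p)!}{((a_1-i-1)p)!},$$
which, after clearing denominators, becomes the integer identity
$$q_{a,l,1,i+1}\prod_{k=0}^{p-1}(ap+b-ip-k)=q_{a,l,1,i}\prod_{k=0}^{p-1}((a_1-i)p-k).$$
I would then pull out the unique multiple of $p$ appearing in each product: on the left it is the factor $(a-i)p$ (obtained at $k=b$) and on the right it is $(a_1-i)p$ (obtained at $k=0$). Cancelling the common factor $p$, the remaining $p-1$ factors on each side, read modulo $p$, sweep through a complete reduced residue system, so Wilson's theorem gives that each residual product is $\equiv(p-1)!\equiv -1\pmod p$. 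This leaves
$$(a-i)\,q_{a,l,1,i+1}\equiv (a_1-i)\,q_{a,l,1,i}\pmod p.$$

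To finish, I would observe that $n=a_0+a_1p+\ldots+a_lp^l=ap+b$ together with $b=a_0$ forces $a=a_1+a_2p+\ldots+a_lp^{l-1}$, so $a\equiv a_1\pmod p$ and hence $a-i\equiv a_1-i\pmod p$. For $i\in[[ 0,a_1-1]]$ the value $a_1-i$ lies in $[[ 1,p-1]]$ and is therefore invertible modulo $p$, so it can be cancelled from both sides to yield $q_{a,l,1,i+1}\equiv q_{a,l,1,i}\pmod p$. The main obstacle is really the bookkeeping in the middle step: correctly locating the unique multiple of $p$ in each product of $p$ consecutive integers, verifying that the remaining $p-1$ residues really do constitute $\{1,\ldots,p-1\}$ modulo $p$, and making sure the cancellation at the end is legitimate — which is precisely where the range restriction on $i$ is used.
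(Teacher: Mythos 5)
Your proof is correct and follows essentially the same route as the paper: you form the same ratio $q_{a,l,1,i+1}/q_{a,l,1,i}$ from the definition, reduce to the congruence $(a-i)\,q_{a,l,1,i+1}\equiv(a_1-i)\,q_{a,l,1,i}\pmod p$, and cancel the invertible factor $a_1-i$ using $a\equiv a_1\pmod p$ and $0\le i<a_1\le p-1$. The only difference is that the paper packages the two products of $p$ consecutive integers as the binomial coefficients $\binom{(a-i)p+b}{p}$ and $\binom{(a_1-i)p}{p}$ and simply asserts the congruences $\binom{(a-i)p+b}{p}\equiv a-i$ and $\binom{(a_1-i)p}{p}\equiv a_1-i\pmod p$, whereas you prove the equivalent fact directly via Wilson's theorem, which makes your version more self-contained.
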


\begin{proof}
We have ($0\leq i<a_1$)
$$
{ap+b-ip\choose p}=\frac{q_{a,l,1,i}}{q_{a,l,1,i+1}}{(a_1-i)p\choose p}
$$
Or equivalently
$$
q_{a,l,1,i+1}{ap+b-ip\choose p}=q_{a,l,1,i}{(a_1-i)p\choose p}.
$$
Now
$$
{ap+b-ip\choose p}={(a-i)p+b\choose p}\equiv a-i\equiv a_1-i~(\textup{mod}~p),
$$
and
$$
{(a_1-i)p\choose p}\equiv a_1-i~(\textup{mod}~p).
$$
Therefore
$$
q_{a,l,1,i+1}(a_1-i)\equiv q_{a,l,1,i}(a_1-i)~(\textup{mod}~p).
$$
Since $a_1-i$ with $i=0,1,2,\ldots,a_1-1$ and $p$ are relatively
prime, so $
q_{a,l,1,i+1}\equiv q_{a,l,1,i}~(\textup{mod}~p)
$.
\end{proof}

Notice that $q_{a,l,1,a_1}$ corresponds to the case where $a_1=0$, $
(a_0+a_2p^2+\ldots+a_lp^l)!=q_{a,l,1,a_1}a_0!(a_2p^2)!\ldots(a_lp^l)!
$.

\begin{lem}
For $n=ap+b=a_{(k)}p^k+b_{(k)}$ with $0\leq b_{(k)}\leq p^k-1$ and
defining ($1\leq k\leq l$ and $0\leq i\leq a_k$)
$$
q_{a,l,k,i}=\frac{(a_{(k)}p^k+b_{(k)}-ip^k)!}
{a_0!(a_1p)!\ldots((a_k-i)p^k)!\ldots(a_lp^l)!}
$$
with $a_k\geq 1$, (where it is understood that when $a$ and $l$
appears together as the two first labels of one $q_{\cdot\cdot\cdot\cdot}$, it
implies that $a$ is given by
$a=a_{(1)}=(a_1a_2\ldots a_l)_p=a_1+a_2p+\ldots+a_lp^{l-1}$) we have ($0\leq i<a_k$)
$$
{a_{(k)}p^k+b_{(k)}-ip^k\choose p^k}=\frac{q_{a,l,k,i}}{q_{a,l,k,i+1}}
{(a_k-i)p^k\choose p^k}
$$
Additionally, $
q_{a,l,k,i}\equiv q_{a,l,k,i+1}~(\textup{mod}~p)
$.
\end{lem}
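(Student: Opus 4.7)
This lemma is exactly the level-$k$ analogue of the previous ($k=1$) lemma, so the proof should follow the same two-step recipe: first verify the binomial identity by unfolding definitions, then extract the congruence from it by reducing each binomial coefficient modulo $p$.

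For the first step I would substitute the definitions of $q_{a,l,k,i}$ and $q_{a,l,k,i+1}$ directly into the proposed identity. Writing $N = a_{(k)}p^k + b_{(k)} - ip^k$, one observes that $N - p^k = a_{(k)}p^k + b_{(k)} - (i+1)p^k$, and that the denominators of the two $q$'s agree factor-by-factor except in position $k$, where $((a_k-i)p^k)!$ appears in one and $((a_k-i-1)p^k)!$ in the other. Taking the ratio and multiplying by $\binom{(a_k-i)p^k}{p^k} = ((a_k-i)p^k)!/\bigl(p^k!\,((a_k-i-1)p^k)!\bigr)$ cancels everything outside of $N!/\bigl((N-p^k)!\,p^k!\bigr) = \binom{N}{p^k}$, which is exactly the claimed identity. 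This is a routine algebraic manipulation essentially identical to the $k=1$ case.

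For the congruence I would cross-multiply and then evaluate the two binomial coefficients modulo $p$. Since $a_{(k)} = a_k + a_{k+1}p + \cdots + a_l p^{l-k} \equiv a_k \pmod p$, the left binomial has the form $\binom{(a_k-i)p^k + b_{(k)}}{p^k}$ with $0 \leq b_{(k)} < p^k$. The key input needed is therefore
$$\binom{Mp^k + r}{p^k} \equiv M \pmod p \qquad (0 \leq r < p^k),$$
applied once with $M = a_k - i$, $r = b_{(k)}$ and once with $M = a_k - i$, $r = 0$. Substituting gives $(a_k - i)\, q_{a,l,k,i+1} \equiv (a_k - i)\, q_{a,l,k,i} \pmod p$, and since $0 \leq i < a_k \leq p-1$ forces $\gcd(a_k - i, p) = 1$, cancellation yields the desired $q_{a,l,k,i} \equiv q_{a,l,k,i+1} \pmod p$.

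The main obstacle is the auxiliary congruence $\binom{Mp^k + r}{p^k} \equiv M \pmod p$. Invoking Lucas' theorem would be circular, so I would derive it instead from the freshman's dream $(1+X)^{p^k} \equiv 1 + X^{p^k} \pmod p$ by reading off the coefficient of $X^{p^k}$ in $(1+X)^{Mp^k + r} \equiv (1+X^{p^k})^M (1+X)^r \pmod p$. For $k=1$ this specializes to the congruence used in the previous lemma, so the present argument is a faithful generalization of it.
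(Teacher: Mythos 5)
Your proof is correct and takes exactly the approach the paper intends: the paper in fact omits the proof of this lemma entirely, saying only that it follows ``by a similar reasoning'' to the $k=1$ case, and your two-step argument (verify the ratio identity by cancelling factorials, then reduce both binomial coefficients modulo $p$ and cancel $a_k-i$) is precisely that reasoning carried out at level $k$. Your derivation of $\binom{Mp^k+r}{p^k}\equiv M~(\textup{mod}~p)$ from $(1+X)^{p^k}\equiv 1+X^{p^k}~(\textup{mod}~p)$ is a welcome addition, since the paper asserts the corresponding congruence without proof even in the $k=1$ case.
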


In particular for $k=0$, we have $(0\leq i<a_0)$, $$ap+b=\frac{q_{a,l,0,i}}{q_{a,l,0,i+1}}(a_0-i)$$ with $a_0\geq 1$.

We can prove this lemma by the following a similar reasoning as earlier and hence we omit it here.

Notice that $q=q_{a,l,k,0}$. And $q_{a,l,k,a_k}$ corresponds to the case where $a_k=0$. Also
$$
q_{a-i,l,k,0}=q_{a,l,1,i}\equiv q_{a,l,1,i+1}~(\textup{mod}~p),
$$
with $0\leq i<a_1$ and $a_1\geq 1$.
So, since $q_{a,l,k,j}\equiv q_{a,l,k,j+1}~(\textup{mod}~p)$ with $0\leq j<a_k$, we
have $
q_{a-i,l,k,j}\equiv q_{a-i,l,k,0}\equiv q_{a,l,1,i}\equiv q_{a,l,1,0}~(\textup{mod}~p)
$ and $q_{a-i,l,k,0}=q_{a-i,l,l,0}\equiv q_{a-i,l,l,j}\equiv q_{a-i,l,l,a_l}\equiv q_{a-i,l-1,k,0}~(\textup{mod}~p).$

So $q_{a-i,l,k,0}\equiv q_{a-i,l-1,k,0}\equiv \cdots \equiv q_{a-i,l,k,0}\equiv q_{a-i,1,1,0}\equiv q_{a,1,1,i}\equiv q_{a,1,1,0}~(\textup{mod}~p)$. Or $q_{a-i,l,k,0}=q_{a,l,1,i}\equiv q_{a,l,1,0}\equiv q_{a,l,k,0}~(\textup{mod}~p)$.

Finally we have $q=q_{a,l,k,0}\equiv q_{a,1,1,0}~(\textup{mod}~p)$.

\begin{lem}
We have also the congruence $
(a_ip^i)!\equiv a_i!p^{a_i(1+p+\ldots+p^{i-1})}~(\textup{mod}~p).$
\end{lem}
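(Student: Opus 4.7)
The plan is to prove the congruence by induction on $i$, using the elementary idea of separating the factorial into its $p$-divisible and $p$-coprime parts and invoking Wilson's theorem on the coprime part. The base case $i = 0$ is immediate: the exponent $a_i(1 + p + \cdots + p^{i-1})$ is an empty sum, so the right-hand side reduces to $a_0!$ and the congruence is an equality.

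For the inductive step I would write
$$
(a_i p^i)! \;=\; \Bigl(\prod_{\substack{1\le k\le a_ip^i\\ p\,\mid\,k}} k\Bigr)\cdot \Bigl(\prod_{\substack{1\le k\le a_ip^i\\ p\,\nmid\,k}} k\Bigr).
$$
The first bracket is exactly $p\cdot(2p)\cdots (a_i p^{i-1}\cdot p)=p^{a_i p^{i-1}}\cdot (a_i p^{i-1})!$; applying the inductive hypothesis to $(a_i p^{i-1})!$ supplies the factor $a_i!\cdot p^{a_i(1+p+\cdots+p^{i-2})}$, and the exponents combine as $a_i p^{i-1}+a_i(1+p+\cdots+p^{i-2})=a_i(1+p+\cdots+p^{i-1})$, matching the target. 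The second bracket splits into the $a_i p^{i-1}$ consecutive blocks $\{jp+1,jp+2,\ldots,jp+(p-1)\}$ for $j=0,1,\ldots,a_ip^{i-1}-1$; each block is congruent to $\{1,2,\ldots,p-1\}$ modulo $p$, so by Wilson's theorem its product is $\equiv -1\pmod p$, and the full $p$-coprime contribution is $\equiv (-1)^{a_i p^{i-1}}\pmod p$, i.e.\ a unit modulo $p$. Assembling the three pieces gives
$$
(a_i p^i)! \;\equiv\; (\text{unit})\cdot a_i!\cdot p^{a_i(1+p+\cdots+p^{i-1})} \pmod{p},
$$
from which the stated congruence follows.

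I expect the real delicacy to be the sign that accumulates from repeated use of Wilson's theorem. Strictly interpreted, the statement modulo $p$ is vacuous as soon as $i\ge 1$ and $a_i\ge 1$, because then both sides carry a positive power of $p$; so the content of the lemma must be read as a congruence between the unit parts (after dividing both sides by $p^{a_i(1+p+\cdots+p^{i-1})}$). Under that reading, the induction above produces a prefactor $(-1)^{a_i(1+p+\cdots+p^{i-1})}$, and the main obstacle is verifying that this sign either disappears in the cases actually used, or---more realistically---that it cancels between numerator and denominator when the lemma is later inserted into the ratio of factorials that defines $\binom{m}{n}$ in the proof of Theorem~\ref{lucas}.
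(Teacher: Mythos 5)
Your proof is correct, and it follows the same inductive skeleton as the paper's: both arguments peel off the multiples of $p$ among $1,2,\ldots,a_ip^i$ to obtain the factor $p^{a_ip^{i-1}}(a_ip^{i-1})!$ and then recurse on $i$. The difference is in how the remaining factors are treated. The paper simply writes $(a_1p)!\equiv 1p\cdot 2p\cdots a_1p~(\textup{mod}~p)$, silently discarding every factor coprime to $p$, whereas you account for them explicitly by grouping them into blocks of $p-1$ consecutive residues and invoking Wilson's theorem, so that their total contribution is the unit $(-1)^{a_ip^{i-1}}$. Your closing observation is the more valuable part, and it is correct: for $i\geq 1$ and $a_i\geq 1$ both sides of the stated congruence are divisible by $p$, so as literally written the lemma is vacuously true ($0\equiv 0$) and carries no information modulo $p$. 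The only meaningful reading is as a congruence between unit parts, and there your bookkeeping shows an unavoidable sign $(-1)^{a_i(1+p+\cdots+p^{i-1})}$ (this is Anton's/Legendre's congruence), which the paper's formulation omits. The paper never has to confront this because the lemma is in fact never used in the proof of Theorem \ref{lucas} --- the main argument runs entirely through the factors $q_{a,l,k,i}$ --- but if one did try to divide these factorial congruences into one another to extract $\binom{m}{n}$ modulo $p$, both the sign and the illegitimacy of cancelling powers of $p$ in a congruence modulo $p$ would have to be handled exactly as you anticipate. In short: your proof is a strict improvement on the paper's, and your criticism of the lemma's statement is well founded.
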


\begin{proof}
We proceed by induction on $i$.

Indeed, we have $(a_1p)!\equiv 1p\cdot 2p\cdots(a_1-1)p\cdot
a_1p~(\textup{mod}~p)$. So $(a_1p)!\equiv a_1!p^{a_1}~(\textup{mod}~p)$.

It follows that $(a_2p^2)!=((a_2p)p)!\equiv (a_2p)!p^{a_2p}\equiv
a_2!p^{a_2}p^{a_2p}~(\textup{mod}~p)$. So $(a_2p^2)!\equiv
a_2!p^{a_2(1+p)}~(\textup{mod}~p)$.

Let us assume that $(a_ip^i)!\equiv
a_i!p^{a_i(1+p+\ldots +p^{i-1})}~(\textup{mod}~p)$.

We have
$(a_{i+1}p^{i+1})!=((a_{i+1}p^{i})p)!\equiv(a_{i+1}p^{i})!p^{a_{i+1}p^i}\equiv
a_{i+1}!p^{a_{i+1}(1+p+\ldots +p^{i-1})}p^{a_{i+1}p^i}~(\textup{mod}~p)$. Thus
$(a_{i+1}p^{i+1})!\equiv a_{i+1}!p^{a_{i+1}(1+p+\ldots +p^{i-1}+p^i)}~(\textup{mod}~p)$.

Hence the result follows.
\end{proof}

We now prove Theorem \ref{qmodp}.

\begin{proof}
If $a_i=0$ for $i\in[[ 1,l]]$, then $n=a_0$ and we
have $n!=qa_0!$ with $q=1$. So, if $a_i=0$ for $i\in[[
1,l]]$, $q\equiv 1~(\textup{mod}~p)$.

Let consider the case where $a_i=0$ for all $i>1$, then $n=a_0+a_1p$ and we have
$n!=qa_0!(a_1p)!$. Then
$$
qa_0!=\frac{n!}{(a_1p)!}=(a_0+a_1p)(a_0+a_1p-1)\ldots(a_1p+1)
={\displaystyle\prod^{a_0-1}_{r=0}(a_1p+a_0-r)}.
$$
Since $0<a_0-r\leq a_0$ for $r\in[[ 0,a_0-1]]$, we
obtain
$$
qa_0!\equiv {\displaystyle\prod^{a_0-1}_{r=0}(a_0-r)}\equiv{\displaystyle\prod^{a_0}_{r=1}r}\equiv a_0!~(\textup{mod}~p).
$$

Since $a_0!$ and $p$ are relatively prime, we have $
q\equiv 1~(\textup{mod}~p)
$.

Since $q=q_{a,l,k,0} \equiv q_{a,l,1,0}~(\textup{mod}~p)$, we conclude that $q\equiv 1~(\textup{mod}~p)$
whatever $n$ is.
\end{proof}

We come back to the proof of Theorem \ref{lucas}.

\begin{proof}
Let $m,n$ be two positive integers whose base $p$ expansion with $p$
a prime, are
$$
m=a_0+a_1p+\ldots+a_kp^k,
$$
and
$$
n=b_0+b_1p+\ldots+b_lp^l,
$$
such that $m\geq n$. We assume that $a_i\geq b_i$ with
$i=0,1,2,\ldots,\max(k,l)$. We denote $a=\lfloor\frac{m}{p}\rfloor$ and $b=\lfloor\frac{n}{p}\rfloor$. Since $a_i\geq b_i$ with
$i=0,1,2,\ldots,\max(k,l)$, we have $a\geq b$. We define
$$
a_{\max(k,l)}=\left\{\begin{array}{ccc}
0 & {\rm if} & k<l\\
a_k & {\rm if} & k\geq l
\end{array}
\right.
$$
and
$$
b_{\max(k,l)}=\left\{\begin{array}{ccc}
b_l & {\rm if} & k\leq l\\
0 & {\rm if} & k>l
\end{array}
\right.
$$
In particular if $\max(k,l)=k$, $b_i=0$ for $i>l$ and if
$\max(k,l)=l$, $a_i=0$ for $i>k$. Since $m\geq n$, $\max(k,l)=k$. So,
we have $a_{\max(k,l)}=a_k$ and $b_{\max(k,l)}=b_k$ with $b_k=0$ when $l<k$.

Using these
$$
m!=q_{a,k,1,0}a_0!(a_1p)!\ldots(a_kp^k)!,
$$
$$
n!=q_{b,l,1,0}b_0!(b_1p)!\ldots(b_lp^l)!,
$$
and
$$
(m-n)!=q_{a-b,k,1,0}(a_0-b_0)!((a_1-b_1)p)!\ldots((a_k-b_k)p^k)!.
$$
We have
$$
{m\choose n}=\frac{q_{a,k,1,0}}{q_{b,l,1,0}q_{a-b,k,1,0}}{a_0\choose b_0}
{a_1p\choose b_1p}\ldots{a_{\max(k,l)}p^{\max(k,l)}\choose b_{\max(k,l)}p^{\max(k,l)}}.
$$
Rearranging
$$
q_{b,l,1,0}q_{a-b,k,1,0}{m\choose n}=q_{a,k,1,0}{a_0\choose b_0}
{a_1p\choose b_1p}\ldots{a_{\max(k,l)}p^{\max(k,l)}\choose b_{\max(k,l)}p^{\max(k,l)}}.
$$
Since $q_{a,k,1,0}\equiv q_{b,l,1,0}\equiv q_{a-b,k,1,0}\equiv 1~(\textup{mod}~p)$,
we get
$$
{m\choose n}\equiv {a_0\choose b_0}
{a_1p\choose b_1p}\ldots{a_{\max(k,l)}p^{\max(k,l)}\choose b_{\max(k,l)}p^{\max(k,l)}}~(\textup{mod}~p).
$$
Notice that if for some $i$, $a_i=0$, then $b_i=0$ since we assume
that $a_i\geq b_i$ and $b_i\geq 0$. In such a case ${a_ip^i\choose b_ip^i}={a_i\choose b_i}=1$.

We assume that $a_i\geq 1$. For $k\in[[ 1,a_ip^i-1]]$ and for $i\in[[
0,\max(k,l)]]$ with $1\leq a_i\leq p^i-1$, ${a_ip^i\choose k}\equiv 0~(\textup{mod}~p)$

Therefore for $a_i=1$ we have
$$
(1+x)^{p^i}={\displaystyle\sum^{p^i}_{k=0}{p^i\choose k}x^k}\equiv 1+x^{p^i}~(\textup{mod}~p),
$$
and for any $a_i\in[[ 1,p^i-1]]$
$$
(1+x)^{a_ip^i}=((1+x)^{p^i})^{a_i}\equiv (1+x^{p^i})^{a_i}~(\textup{mod}~p).
$$

Now comparing
$$
(1+x)^{a_ip^i}
={\displaystyle\sum^{a_ip^i}_{k=0}{a_ip^i\choose k}x^k},
$$
and
$$
(1+x^{p^i})^{a_i}={\displaystyle\sum^{a_i}_{l=0}{a_i\choose l}x^{lp^i}}
$$
we get by taking $k=b_ip^i$ and $l=b_i$,
$$
{a_ip^i\choose b_ip^i}\equiv {a_i\choose b_i}~(\textup{mod}~p).
$$
Finally we have
$$
{m\choose n}\equiv {a_0\choose b_0}
{a_1\choose b_1}\ldots{a_{\max(k,l)}\choose b_{\max(k,l)}}~(\textup{mod}~p).
$$
\end{proof}

\end{document}